\theoremstyle{plain}
\newtheorem{thm}{\protect\theoremname}
\theoremstyle{plain}
\newtheorem{cor}[thm]{\protect\corollaryname}
\providecommand{\corollaryname}{Corollary}
\providecommand{\theoremname}{Theorem}
\begin{document}

\title{a note on Clebsch-Gordan integral, Fourier-Legendre expansions and
closed form for hypergeometric series}

\author{Marco Cantarini}

\address{Dipartimento di Ingegneria Industriale e Scienze Matematiche \\
Universit\`a Politecnica delle Marche\\
Via Brecce Bianche, 12\\
60131 Ancona, Italia}

\email{m.cantarini@univpm.it}
\begin{abstract}
In this paper we show that a closed form formula for the generalized
Clebsch-Gordan integral and the Fourier-Legendre expansion theory
allow to evaluate hypergeometric series involving powers of the normalized
central binomial coefficient ${\displaystyle \frac{1}{4^{n}}\dbinom{2n}{n}}$.
\end{abstract}

\keywords{Hypergeometric functions, Fourier-Legendre expansion, Clebsch-Gordan
integral, complete elliptic integral of the first kind, closed form.}

\maketitle
Mathematical Subject Classification 2020: 33C20, 33E05, 42C10, 33C75.

\section{introduction}

The study of hypergeometric transformation, and its link to the analysis
of closed-form of infinite series in terms of well-known mathematical
constants and special values of Euler's Gamma function, has been deeply
analyzed in many articles and with different techniques. Indeed, it
is well known that this type of research is of interest mathematics
and in other scientific fields; a very exhaustive illustration can
be found in \cite{Bor}. Among the many tools developed, recently
it was shown that the Fourier-Legendre (FL) expansion theory is a
very useful approach for the study of a class of hypergeometric series,
in particular series whose summands are powers of the normalized central
binomial coefficients, harmonic numbers and rational functions (see
\cite{CamDauSon},\cite{CanDau}) because it allows to formulate these
series in terms of Euler sums or in integrals involving special functions
like polylogarithms or complete elliptic integrals of the first and
second kind. This information shows again the interest about this
type of problems since, as we know, the evaluation of multiple elliptic
integrals where the integrands are combinations of complete elliptic
integrals of the first or second kind is an active research area and
with applications in, for example, high-energy physics, statistical
mechanics and probability theory. Very recently \cite{CamCanDau}
it was observed that the FL theory combined with the theory of fractional
operators, in particular with semi-differentiation and semi-integration
(for some details about fractional calculus see, for example, \cite{KibMarSam})
of scalar product of some functions, including complete elliptic integrals
of the first or second kind, allow us to evaluate particular hypergeometric
functions with fractional (in particular, quarter-integers) parameters.

It is important to emphasize again how these topics and techniques
can be relate to other, and sometimes unexpected, mathematical topics;
for example, if it is quite natural to think about the classical Ramanujan-type
series for $1/\pi$ (for a survey of this topic see for example, \cite{BarBerCha}
and for formulas via hypergeometric transformations see \cite{CooGeYe}),
the connection with additive number theory problems is probably less
evident, in particular asymptotic formulas of functions that count
the number of representations of an integer as the sum of elements
that are in some subset of natural numbers (essentially, primes or
powers of primes). Indeed, fractional operators applied to particular
power series are involved in the study of explicit formulas for the
so-called Ces\`aro average of these counting functions (for the interested
reader, see \cite{BruKacPer}\cite{Can1}\cite{Can2}\cite{Can3}\cite{LanZac}),
therefore it is plausible to think that the techniques developed may
also be of interest for these types of problems

In this paper, we will focus on a results of Zhou \cite{zhou} about
a closed form for the generalized Clebsch-Gordan integral
\[
\int_{-1}^{1}P_{\mu}\left(x\right)P_{\nu}\left(x\right)P_{\nu}\left(-x\right)dx
\]
where $P_{\nu}\left(x\right),P_{\mu}\left(x\right)$ are the Legendre
functions of arbitrary complex degree $\nu,\mu\in\mathbb{C}$. We
show that this result can be can be interpreted in terms of the FL
theory and this point of view allows to evaluate series whose addends
are powers of central binomials (and so, particular hypergoemetric
functions). Furthermore, we will show that from Zhou results we can
obtain some formulas that recall the well-known Brafman's formula
\cite{Braf} and we can evaluate very easily some integral moment
regarding combinations of complete elliptic integrals of the first
kind.

Note that we adopt the convention whereby the argument of a complete
elliptic integral is the elliptic modulus, that is
\[
K(x):=\int_{0}^{\pi/2}\frac{du}{\sqrt{1-x\sin^{2}\left(u\right)}}.
\]
\,
I thank the referee very much for the comments and suggestions, which have greatly improved the article.

\section{On some consequences of Zhou's Paper}

We start our analysis observing that the closed form for the Clebsch-Gordan
integral can be interpreted as the FL expansions of a combination
of particular Gauss hypergeometric functions $_{2}F_{1}(a,b;c;z)$.
\begin{thm}
Let $\nu\in\mathbb{C}$ and $x\in\left[0,1\right]$. The following
FL expansions there holds:
\begin{equation}
_{2}F_{1}\left(-\nu,\nu+1;1;x\right){}_{2}F_{1}\left(-\nu,\nu+1;1;1-x\right)\label{eq:similbraf-1}
\end{equation}

\[
=-\frac{\sin\left(\pi\nu\right)}{2}\sum_{m\geq0}\left[\frac{1}{4^{m}}\dbinom{2m}{m}\right]^{2}\frac{\Gamma\left(m-\nu\right)\Gamma\left(m+\nu+1\right)\left(4m+1\right)}{\Gamma\left(m-\nu+\frac{1}{2}\right)\Gamma\left(m+\nu+\frac{3}{2}\right)}P_{2m}\left(2x-1\right)
\]
where the indeterminate form must be interpreted as limits.
\end{thm}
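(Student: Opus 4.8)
The plan is to identify the left-hand side as a product of Legendre functions of degree $\nu$ and then to read off the expansion coefficients directly from Zhou's closed form for the Clebsch--Gordan integral. First I would apply the classical identity ${}_2F_1(-\nu,\nu+1;1;x)=P_\nu(1-2x)$, valid for all $\nu\in\mathbb{C}$ and $x\in[0,1]$. Setting $y=1-2x$, the two factors become ${}_2F_1(-\nu,\nu+1;1;x)=P_\nu(y)$ and ${}_2F_1(-\nu,\nu+1;1;1-x)=P_\nu(2x-1)=P_\nu(-y)$, so the function to be expanded is $f(y)=P_\nu(y)P_\nu(-y)$ on $[-1,1]$. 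Since $f(-y)=f(y)$ is even, only the even Legendre polynomials survive, which accounts for both the $P_{2m}$ and the argument $P_{2m}(2x-1)=P_{2m}(y)$ on the right-hand side.

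Next I would run the Fourier--Legendre machinery. Writing $f(y)=\sum_{n\geq0}c_nP_n(y)$ with $c_n=\frac{2n+1}{2}\int_{-1}^1 f(y)P_n(y)\,dy$, evenness forces $c_{2m+1}=0$, while $c_{2m}=\frac{4m+1}{2}\int_{-1}^1 P_\nu(y)P_\nu(-y)P_{2m}(y)\,dy$. The key point is that this last integral is exactly the generalized Clebsch--Gordan integral $\int_{-1}^1 P_\mu(x)P_\nu(x)P_\nu(-x)\,dx$ of Zhou specialized to the polynomial degree $\mu=2m$. Inserting Zhou's closed form, the goal reduces to establishing $\int_{-1}^1 P_\nu(y)P_\nu(-y)P_{2m}(y)\,dy=-\sin(\pi\nu)\left[\frac{1}{4^m}\binom{2m}{m}\right]^2\frac{\Gamma(m-\nu)\Gamma(m+\nu+1)}{\Gamma(m-\nu+\frac12)\Gamma(m+\nu+\frac32)}$, after which multiplying by $\frac{4m+1}{2}$ produces precisely the coefficient in the statement, the factor $4m+1$ being the one carried over from the Fourier--Legendre normalization.

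The step I expect to be the main obstacle is the algebraic reduction of Zhou's general Gamma-function expression to this clean form. Zhou's formula for generic $\mu$ carries a $\sin(\pi\nu)$ prefactor together with Gamma functions in both $\mu$ and $\nu$; setting $\mu=2m$ I would use the Legendre duplication formula and the identity $\frac{1}{4^m}\binom{2m}{m}=\frac{(1/2)_m}{m!}=\frac{\Gamma(m+\frac12)}{\sqrt{\pi}\,\Gamma(m+1)}$ to recognize the squared normalized central binomial coefficient and to collapse the $\mu$-dependent Gamma factors. Careful bookkeeping of the numerous $\frac12$-shifts and of the $\sqrt{\pi}$ and powers of $2$ is where most of the work lies.

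Finally I would address convergence and the indeterminate forms. For non-integer $\nu$ the product $P_\nu(y)P_\nu(-y)$ has only logarithmic singularities at $y=\pm1$ and hence lies in $L^2[-1,1]$, so the Fourier--Legendre series converges to $f$ in $L^2$ and, by standard regularity arguments, pointwise on the open interval, giving the identity for $x\in(0,1)$ with the endpoints understood as limits. When $\nu$ is a non-negative integer the prefactor $\sin(\pi\nu)$ vanishes while some Gamma factors acquire poles, so each summand must be read as a limit $\nu\to n$; I would check that this limit reproduces the terminating expansion of the corresponding product of Legendre polynomials, matching the convention that indeterminate forms are interpreted as limits.
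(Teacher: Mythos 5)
Your proposal follows essentially the same route as the paper's proof: rewrite each $_{2}F_{1}$ as a Legendre function via $P_{\nu}(1-2x)$, identify the Fourier--Legendre coefficients of $P_{\nu}(y)P_{\nu}(-y)$ with Zhou's Clebsch--Gordan integral at polynomial degree $2m$, and insert his closed form. The only (cosmetic) differences are that the paper kills the odd-degree coefficients by citing Zhou's formula for odd degree rather than by the parity of the product, and quotes the specialized Gamma-quotient directly from Zhou instead of re-deriving it from the general $\mu$ case; your normalization bookkeeping (the factor $\frac{4m+1}{2}$ against the value of the integral over $[-1,1]$) is correct.
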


\begin{proof}
Recalling that
\[
_{2}F_{1}\left(-\nu,\nu+1;1;x\right)=P_{\nu}\left(1-2x\right),\,x\in\left[0,1\right],\,\nu\in\mathbb{C},
\]

(see equation $6.2$ of \cite{Kris}) we have, for $m\in\mathbb{N}$,
\begin{align}
&\int_{0}^{1}{}_{2}F_{1}\left(-\nu,\nu+1;1;x\right){}_{2}F_{1}\left(-\nu,\nu+1;1;1-x\right)P_{m}\left(2x-1\right)dx\nonumber \\  &= \int_{0}^{1}P_{\nu}\left(1-2x\right)P_{\nu}\left(2x-1\right)P_{m}\left(2x-1\right)dx\nonumber \\
 &= \int_{-1}^{1}P_{\nu}\left(x\right)P_{\nu}\left(-x\right)P_{m}\left(x\right)dx.\label{eq:zhou1}
\end{align}
From equation ($19_{(m,n)}$) and ($19_{(2m+1,\nu)}$) of \cite{zhou}
we have that (\ref{eq:zhou1}) is $0$ for any Legendre polynomial $P_{n}\left(2x-1\right)$ of odd degree $n$
and
\[
\int_{-1}^{1}P_{\nu}\left(x\right)P_{\nu}\left(-x\right)P_{2m}\left(x\right)dx=-\frac{\sin\left(\pi\nu\right)}{2}\left[\frac{1}{4^{m}}\dbinom{2m}{m}\right]^{2}\frac{\Gamma\left(m-\nu\right)\Gamma\left(m+\nu+1\right)}{\Gamma\left(m-\nu+\frac{1}{2}\right)\Gamma\left(m+\nu+\frac{3}{2}\right)}
\]
where $m$ is a positive integer, $\nu\in\mathbb{C}$ and the indeterminate
form must be interpreted as limits. The thesis follows recalling that
if $f(x)/\sqrt[4]{1-x^{2}},\,x\in(-1,1)$ is integrable, then
\[
\sum_{n\geq0}\left(n+\frac{1}{2}\right)P_{n}\left(\xi\right)\int_{-1}^{1}f\left(x\right)P_{n}\left(x\right)dx=\frac{f\left(\xi+0\right)-f\left(\xi-0\right)}{2}
\]

for a certain $\xi\in(-1,1)$ if some conditions for convergence are met (for more details see \cite{Hob}, Chapter VII, p. $329$).

\end{proof}
As we had anticipated, the previous formula clearly recalls the well-known
Brafman's formula (see \cite{Braf}) and it has some interesting consequences.
\begin{thm}
For $\nu\in\mathbb{C}\setminus\left(\left\{ -2\mathbb{N}+1\right\} \cup\left\{ 2\mathbb{N}\right\} \right)$
we have 
\begin{equation}
\frac{\cot\left(\frac{\pi\nu}{2}\right)\Gamma\left(\frac{1+\nu}{2}\right)^{2}}{\pi\Gamma\left(\frac{2+\nu}{2}\right)^{2}}=\sum_{m\geq0}\left[\frac{1}{4^{m}}\dbinom{2m}{m}\right]^{3}\left(-1\right)^{m+1}\frac{\Gamma\left(m-\nu\right)\Gamma\left(m+\nu+1\right)\left(4m+1\right)}{\Gamma\left(m-\nu+\frac{1}{2}\right)\Gamma\left(m+\nu+\frac{3}{2}\right)}\label{eq:cons1}
\end{equation}
where the undetermined forms must be interpreted as limits and, for
$x\in\left[0,1\right]$, we have
\begin{equation}
K\left(x\right)K\left(1-x\right)=\frac{\pi^{3}}{8}\sum_{m\geq0}\left[\frac{1}{4^{m}}\dbinom{2m}{m}\right]^{4}\left(4m+1\right)P_{2m}\left(2x-1\right).\label{eq:cons2}
\end{equation}
\end{thm}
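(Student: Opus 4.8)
The plan is to obtain both identities as specializations of the Fourier--Legendre expansion established in the previous theorem: \eqref{eq:cons1} will come from evaluating that expansion at $x=1/2$, while \eqref{eq:cons2} will come from setting $\nu=-1/2$. In both cases the work reduces to substituting a distinguished value into \eqref{eq:similbraf-1} and then performing elementary special-function simplifications.

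For \eqref{eq:cons1} I would substitute $x=1/2$. On the Legendre side every polynomial collapses to its value at the origin, and I would use the classical evaluation $P_{2m}(0)=(-1)^{m}4^{-m}\binom{2m}{m}$, which turns the factor $[4^{-m}\binom{2m}{m}]^{2}P_{2m}(0)$ into $(-1)^{m}[4^{-m}\binom{2m}{m}]^{3}$; after absorbing the remaining sign, $(-1)^{m}$ becomes $(-1)^{m+1}$ and the series on the right of \eqref{eq:cons1} appears exactly. On the left the product becomes the square ${}_{2}F_{1}(-\nu,\nu+1;1;1/2)^{2}=P_{\nu}(0)^{2}$, so it remains to express $P_{\nu}(0)^{2}$ in closed form. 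Here I would invoke $P_{\nu}(0)=\sqrt{\pi}\,/\bigl(\Gamma(\tfrac{2+\nu}{2})\Gamma(\tfrac{1-\nu}{2})\bigr)$ and rewrite $\Gamma(\tfrac{1-\nu}{2})^{-1}$ through the reflection formula as $\cos(\tfrac{\pi\nu}{2})\Gamma(\tfrac{1+\nu}{2})/\pi$, giving $P_{\nu}(0)^{2}=\cos^{2}(\tfrac{\pi\nu}{2})\Gamma(\tfrac{1+\nu}{2})^{2}/\bigl(\pi\,\Gamma(\tfrac{2+\nu}{2})^{2}\bigr)$. Since the substitution shows that the target series equals $2P_{\nu}(0)^{2}/\sin(\pi\nu)$, the double-angle identity $\sin(\pi\nu)=2\sin(\tfrac{\pi\nu}{2})\cos(\tfrac{\pi\nu}{2})$ collapses $2\cos^{2}(\tfrac{\pi\nu}{2})/\sin(\pi\nu)$ to $\cot(\tfrac{\pi\nu}{2})$, recovering the left-hand side of \eqref{eq:cons1}. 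The excluded set $\{-2\mathbb{N}+1\}\cup\{2\mathbb{N}\}$ is exactly where the Gamma factors or the cotangent acquire poles, so away from it the statement is a genuine equality and not merely a limit.

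For \eqref{eq:cons2} I would set $\nu=-1/2$. The two hypergeometric factors then become ${}_{2}F_{1}(\tfrac12,\tfrac12;1;x)=\tfrac{2}{\pi}K(x)$ and ${}_{2}F_{1}(\tfrac12,\tfrac12;1;1-x)=\tfrac{2}{\pi}K(1-x)$ in the paper's convention, so the left side equals $\tfrac{4}{\pi^{2}}K(x)K(1-x)$. On the right, $\sin(\pi\nu)=-1$, and the Gamma ratio simplifies via $\Gamma(m+\tfrac12)/m!=\sqrt{\pi}\,4^{-m}\binom{2m}{m}$ to $\pi[4^{-m}\binom{2m}{m}]^{2}$, which upgrades the squared central binomial to a fourth power while leaving the factor $(4m+1)P_{2m}(2x-1)$ intact. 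Collecting the constants $\tfrac{\pi^{2}}{4}\cdot\tfrac{\pi}{2}=\tfrac{\pi^{3}}{8}$ then gives \eqref{eq:cons2}.

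The point requiring the most care is the legitimacy of the substitution $\nu=-1/2$. This value is harmless because $\sin(\pi\nu)$ is nonzero there, so no indeterminate form arises and the Gamma simplifications are valid outright. For the resulting series I would note convergence on the open interval: the asymptotics $4^{-m}\binom{2m}{m}\sim(\pi m)^{-1/2}$ give a fourth power of order $m^{-2}$, and for interior $x$ the estimate $P_{2m}(2x-1)=O(m^{-1/2})$ makes the general term $O(m^{-3/2})$, hence absolutely summable, consistent with the divergence of $K(x)K(1-x)$ at the endpoints. I would simply appeal to the convergence hypotheses already cited in the previous theorem; everything else is the elementary special-function bookkeeping described above.
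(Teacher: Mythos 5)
Your proposal is correct and follows essentially the same route as the paper: equation \eqref{eq:cons1} from the $x=1/2$ specialization of \eqref{eq:similbraf-1} via Gauss's second summation theorem (equivalently $P_{\nu}(0)$) plus the reflection and double-angle identities, and \eqref{eq:cons2} from the $\nu=-1/2$ specialization via ${}_{2}F_{1}(\tfrac12,\tfrac12;1;x)=\tfrac{2}{\pi}K(x)$. You simply spell out the Gamma-function and trigonometric bookkeeping that the paper leaves implicit.
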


\begin{proof}
Due to the fact that
\begin{equation}
_{2}F_{1}\left(-\nu,\nu+1;1;\frac{1}{2}\right)=\frac{\sqrt{\pi}}{\Gamma\left(\frac{1-\nu}{2}\right)\Gamma\left(\frac{\nu+2}{2}\right)}\label{eq:2f1 special}
\end{equation}
by the Gauss's second summation theorem \cite{Bai}, formula (\ref{eq:cons1})
follows from the $x=1/2$ case of (\ref{eq:similbraf-1}) and (\ref{eq:cons2})
follows from the $\nu=-1/2$ case of (\ref{eq:similbraf-1}).
\end{proof}
These results produce some interesting identities that CAS like Mathematica
does not recognize or it only recognizes them as combinations of generalized hypergeometric functions; in the next corollary we show some interesting examples. Note that we will write the combination of hypergeometric functions only in cases where the CAS is able to recognize it and if the formula contains at most two terms, to make the results more readable.

\begin{cor}
We have

\begin{align}
\sum_{m\geq0}\left[\frac{1}{4^{m}}\dbinom{2m}{m}\right]^{3}\frac{\left(-1\right)^{m+1}(4m+1)^{2}}{\left(4m-1\right)\left(4m+3\right)} = \frac{32\left(2+\sqrt{2}\right)\Gamma\left(\frac{1}{4}\right)^{2}}{\Gamma\left(\frac{1}{8}\right)^{4}} \label{eq:cor1}
\end{align}

\begin{equation}
\sum_{m\geq0}\left[\frac{1}{4^{m}}\dbinom{2m}{m}\right]^{3}\frac{\left(-1\right)^{m+1}\left(4m-1\right)\left(4m+3\right)}{\left(4m-3\right)\left(4m+5\right)}=\frac{32\sqrt{2}\left(1+\sqrt{2}\right)\Gamma\left(\frac{1}{4}\right)^{2}}{9\Gamma\left(\frac{1}{8}\right)^{4}},\label{eq:cor1.5}
\end{equation}
\begin{equation}
\sum_{m\geq0}\left[\frac{1}{4^{m}}\dbinom{2m}{m}\right]^{5}\frac{\left(-1\right)^{m}\left(4m+1\right)\left(4m^{2}+2m+1\right)}{\left(2m-1\right)^{2}\left(m+1\right)^{2}}=\frac{128}{\Gamma\left(\frac{1}{4}\right)^{4}},\label{eq:cor2}
\end{equation}
\begin{equation}
\sum_{m\geq0}\left[\frac{1}{4^{m}}\dbinom{2m}{m}\right]^{5}\left(-1\right)^{m}\left(4m+1\right)\left(\psi^{(1)}\left(m+\frac{1}{2}\right)-\psi^{(1)}\left(m+1\right)\right)=\frac{2\Gamma\left(\frac{1}{4}\right)^{4}C}{\pi^{4}}\label{eq:cor3}
\end{equation}

\begin{align}
\sum_{m\geq0}\left[\frac{1}{4^{m}}\dbinom{2m}{m}\right]^{5}\left(-1\right)^{m}\left(4m+1\right) &=\frac{1}{8}\left(8\,_{5}F_{4}\left(\frac{1}{2},\frac{1}{2},\frac{1}{2},\frac{1}{2},\frac{1}{2};1,1,1,1;-1\right)\right. \nonumber \\
& \left.-\,_{5}F_{4}\left(\frac{3}{2},\frac{3}{2},\frac{3}{2},\frac{3}{2},\frac{3}{2};2,2,2,2;-1\right)\right) \nonumber \\
&=\frac{\Gamma\left(\frac{1}{4}\right)^{4}}{2\pi^{4}}\label{eq:cor4}
\end{align}

where $C$ is the Catalan's constant and $\psi^{(1)}(x)$ is the trigamma
function.
\end{cor}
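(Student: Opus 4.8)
The plan is to read off all five identities from the single master formula \eqref{eq:cons1}, regarded as a function of the parameter $\nu$. Set
\[
F(\nu):=\frac{\cot\left(\frac{\pi\nu}{2}\right)\Gamma\left(\frac{1+\nu}{2}\right)^{2}}{\pi\Gamma\left(\frac{2+\nu}{2}\right)^{2}}=\sum_{m\geq0}c_{m}^{3}(-1)^{m+1}(4m+1)R_{m}(\nu),
\]
where $c_{m}:=\frac{1}{4^{m}}\binom{2m}{m}$ and $R_{m}(\nu):=\Gamma(m-\nu)\Gamma(m+\nu+1)/(\Gamma(m-\nu+\tfrac12)\Gamma(m+\nu+\tfrac32))$. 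The Gamma-quotient identities \eqref{eq:cor1} and \eqref{eq:cor1.5} will come from specialising $\nu$ to $\tfrac14$ and $\tfrac34$, the identity \eqref{eq:cor4} from $\nu=-\tfrac12$, and \eqref{eq:cor2}, \eqref{eq:cor3} from the first and second $\nu$-derivatives of $F$ at $\nu=\tfrac12$ and $\nu=-\tfrac12$. The structural fact driving everything is that $R_{m}(\nu)$ is rational in $m$ precisely when $2\nu$ is a half-integer, since then its four Gamma arguments pair into integer shifts; the left-hand evaluations then reduce to the Legendre duplication and reflection formulas and to special values of the di- and trigamma functions.

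For \eqref{eq:cor1} I would take $\nu=\tfrac14$: pairing $\Gamma(m+\tfrac54)/\Gamma(m+\tfrac14)=m+\tfrac14$ and $\Gamma(m-\tfrac14)/\Gamma(m+\tfrac74)=1/((m+\tfrac34)(m-\tfrac14))$ gives $R_{m}(\tfrac14)=4(4m+1)/((4m-1)(4m+3))$, so that $c_{m}^{3}(-1)^{m+1}(4m+1)R_{m}(\tfrac14)$ is four times the summand of \eqref{eq:cor1}. On the left, $\cot(\pi/8)=1+\sqrt2$ together with $\Gamma(\tfrac98)=\tfrac18\Gamma(\tfrac18)$ and the duplication relation $\Gamma(\tfrac18)\Gamma(\tfrac58)=2^{3/4}\sqrt\pi\,\Gamma(\tfrac14)$ turn $F(\tfrac14)$ into $128(2+\sqrt2)\Gamma(\tfrac14)^{2}/\Gamma(\tfrac18)^{4}$; dividing by $4$ yields \eqref{eq:cor1}. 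Identity \eqref{eq:cor1.5} is the same computation at $\nu=\tfrac34$ with $\cot(3\pi/8)=\sqrt2-1$ and duplication at $z=\tfrac38$, the extra work being a reflection-formula rewriting of $\Gamma(\tfrac78)/\Gamma(\tfrac38)$ in terms of $\Gamma(\tfrac14)$ and $\Gamma(\tfrac18)$, together with the simplification $(\sqrt2-1)(3+2\sqrt2)=1+\sqrt2$. Finally \eqref{eq:cor4} is simply $F(-\tfrac12)$: here $R_{m}(-\tfrac12)=\pi c_{m}^{2}$ promotes the cube to a fifth power, and $\cot(-\pi/4)=-1$ with $\Gamma(\tfrac34)^{2}=2\pi^{2}/\Gamma(\tfrac14)^{2}$ gives the value $\Gamma(\tfrac14)^{4}/(2\pi^{4})$, the intermediate ${}_{5}F_{4}$ form following by splitting $4m+1$ and recognising the two resulting series.

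For \eqref{eq:cor2} I would differentiate once and set $\nu=\tfrac12$. Since $c_{m}$ and $4m+1$ are independent of $\nu$, termwise differentiation (legitimate because both sides are analytic near $\tfrac12$ and the $m$-th term is $O(m^{-3/2})$) produces $c_{m}^{3}(-1)^{m+1}(4m+1)R_{m}'(\tfrac12)$ with $R_{m}'(\tfrac12)=R_{m}(\tfrac12)(\log R_{m})'(\tfrac12)$ for $m\geq1$. At $\nu=\tfrac12$ the four $\psi$-arguments in $(\log R_{m})'$ are integer-spaced, so they telescope to $\tfrac{2}{2m+1}+\tfrac{2}{2m-1}-\tfrac1m-\tfrac{1}{m+1}$; multiplying by $R_{m}(\tfrac12)=\pi c_{m}^{2}\,m(2m+1)/((m+1)(2m-1))$ collapses to $\pi c_{m}^{2}(4m^{2}+2m+1)/((2m-1)^{2}(m+1)^{2})$, a form valid for all $m\geq0$ (the $m=0$ term, where $R_{0}(\tfrac12)=0$, being recovered as a direct limit). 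This is exactly the rational factor in \eqref{eq:cor2}. The left side is evaluated from $F'/F=-\pi/\sin(\pi\nu)+\psi(\tfrac{1+\nu}{2})-\psi(\tfrac{2+\nu}{2})$; at $\nu=\tfrac12$ the recurrence $\psi(\tfrac54)=\psi(\tfrac14)+4$ and the reflection value $\psi(\tfrac34)-\psi(\tfrac14)=\pi$ cancel the transcendental parts and leave $F'(\tfrac12)/F(\tfrac12)=-4$, so $F'(\tfrac12)=-128\pi/\Gamma(\tfrac14)^{4}$ and \eqref{eq:cor2} follows.

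Identity \eqref{eq:cor3} is the delicate case and, I expect, the main obstacle. The symmetry $P_{\nu}=P_{-\nu-1}$ forces $F(\nu)=F(-1-\nu)$, so $F$ is even about $\nu=-\tfrac12$ and $F'(-\tfrac12)=0$; correspondingly $(\log R_{m})'(-\tfrac12)=0$ termwise, and one must pass to the second derivative, where $R_{m}''(-\tfrac12)=R_{m}(-\tfrac12)(\log R_{m})''(-\tfrac12)=2\pi c_{m}^{2}(\psi^{(1)}(m+\tfrac12)-\psi^{(1)}(m+1))$ produces exactly the trigamma difference of \eqref{eq:cor3}. The two delicate points are justifying termwise double differentiation (analyticity together with an $O(m^{-7/2})$ bound on the differentiated terms) and computing the left side: from $F''/F=(F'/F)'$ at $\nu=-\tfrac12$, the trigonometric contribution $\pi^{2}\cos(\pi\nu)/\sin^{2}(\pi\nu)$ vanishes because $\cos(-\tfrac{\pi}{2})=0$, leaving $\tfrac12(\psi^{(1)}(\tfrac14)-\psi^{(1)}(\tfrac34))=8C$ via $\psi^{(1)}(\tfrac14)-\psi^{(1)}(\tfrac34)=16C$. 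Together with $F(-\tfrac12)=-\Gamma(\tfrac14)^{4}/(2\pi^{3})$ this gives $F''(-\tfrac12)=-4\Gamma(\tfrac14)^{4}C/\pi^{3}$, which is \eqref{eq:cor3} after dividing by $-2\pi$ and using $(-1)^{m+1}=-(-1)^{m}$.
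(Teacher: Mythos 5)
Your proposal is correct and follows essentially the same route as the paper: specializing (\ref{eq:cons1}) at $\nu=1/4$ and $\nu=3/4$ for (\ref{eq:cor1}) and (\ref{eq:cor1.5}), one $\nu$-derivative at $\nu=1/2$ for (\ref{eq:cor2}), and two $\nu$-derivatives at $\nu=-1/2$ for (\ref{eq:cor3}); the paper's proof sketch garbles its cross-references, but this is clearly the intended argument, and your explicit computations of $R_m$, of $F'/F$ at $\nu=1/2$, and of $F''/F$ at $\nu=-1/2$ all check out. The only deviation is (\ref{eq:cor4}), which the paper reads off from the $x=1/2$ case of (\ref{eq:cons2}) while you use the $\nu=-1/2$ case of (\ref{eq:cons1}); since both are the $(x,\nu)=(1/2,-1/2)$ specialization of the same master expansion, the two routes are equivalent.
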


\begin{proof}
Equations (\ref{eq:cor1}) and (\ref{eq:cor1.5}) follow from the $\nu=1/4$
and $\nu=3/4$ cases of (\ref{eq:cons1}) ; differentiating (\ref{eq:cor2}) with 
respect to $\nu$ before specializing to $\nu=1/2$, one arrives to
(\ref{eq:cor1}); twice differentiating (\ref{eq:cor1}) with respect to $\nu$ before specializing
$\nu=-1/2$, one arrives to (\ref{eq:cor3});
equation (\ref{eq:cor4}) follows from the $x=1/2$ case of (\ref{eq:cons2}). 
\end{proof}
Note that formula (\ref{eq:cons2}) allow us to evaluate quite easily
the moments of the function $K(x)K(1-x)$. 

\begin{cor}
\label{momentK}For every $n\in\mathbb{N}$ we have
\begin{align*}
 & \int_{0}^{1}x^{n}K\left(x\right)K\left(1-x\right)dx\\
 & =\frac{\pi^{3}\Gamma\left(n+1\right)^{2}}{8}\sum_{m\leq n/2}\left[\frac{1}{4^{m}}\dbinom{2m}{m}\right]^{4}\frac{4m+1}{\Gamma\left(n+2m+2\right)\Gamma\left(n+1-2m\right)}
\end{align*}
and for every $n\in\mathbb{N}^{+}$ we have
\begin{align*}
 & \int_{0}^{1}x^{n-1}K\left(x\right)\left(1-x\right)^{n-1}K\left(1-x\right)dx\\
 & =\frac{\pi^{7/2}}{8}\frac{\Gamma\left(n\right)^{3}\Gamma\left(n+\frac{1}{2}\right)}{\Gamma\left(2n\right)}\sum_{m<n}\left[\frac{1}{4^{m}}\dbinom{2m}{m}\right]^{4}\frac{4m+1}{\Gamma\left(\frac{1-2m}{2}\right)\Gamma\left(m+1\right)\Gamma\left(\frac{2m+1}{2}+n\right)\Gamma\left(-m+n\right)}.
\end{align*}
\end{cor}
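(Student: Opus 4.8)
The plan is to integrate the Fourier--Legendre expansion (\ref{eq:cons2}) term by term against the polynomial weights $x^{n}$ and $x^{n-1}(1-x)^{n-1}$. Writing $f(x)=K(x)K(1-x)$, one first notes that $f$ has only logarithmic singularities at the endpoints, hence $f\in L^{2}(0,1)$; consequently the partial sums of (\ref{eq:cons2}) converge to $f$ in $L^{2}(0,1)$ and the inner product of $f$ with any polynomial may be computed termwise by continuity of the inner product. Moreover, since $P_{2m}(2x-1)$ is orthogonal on $[0,1]$ to every polynomial of degree $<2m$, all terms with $2m$ exceeding the degree of the weight vanish, so each series collapses to the finite sum indicated in the statement. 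Thus both identities reduce to evaluating the two families of moments $\int_{0}^{1}x^{n}P_{2m}(2x-1)\,dx$ and $\int_{0}^{1}x^{n-1}(1-x)^{n-1}P_{2m}(2x-1)\,dx$.

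For the first moment I would use the Rodrigues formula for the shifted Legendre polynomial, $P_{k}(2x-1)=\frac{1}{k!}\frac{d^{k}}{dx^{k}}\big[x^{k}(x-1)^{k}\big]$, and integrate by parts $k=2m$ times; the boundary terms vanish because $x^{k}(x-1)^{k}$ has a zero of order $k$ at each endpoint. This transfers the derivatives onto $x^{n}$ and leaves a Beta integral, giving $\int_{0}^{1}x^{n}P_{2m}(2x-1)\,dx=\frac{\Gamma(n+1)^{2}}{\Gamma(n-2m+1)\,\Gamma(n+2m+2)}$, which is nonzero exactly when $m\le n/2$. Substituting into the integrated form of (\ref{eq:cons2}) and pulling out $\Gamma(n+1)^{2}$ yields the first stated formula directly.

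For the second moment the substitution $t=2x-1$ gives $\int_{0}^{1}x^{n-1}(1-x)^{n-1}P_{2m}(2x-1)\,dx=\frac{1}{2\cdot 4^{\,n-1}}\int_{-1}^{1}(1-t^{2})^{n-1}P_{2m}(t)\,dt$. I would then insert the quadratic representation $P_{2m}(t)=(-1)^{m}\frac{1}{4^{m}}\binom{2m}{m}\,{}_{2}F_{1}\!\big(-m,m+\tfrac12;\tfrac12;t^{2}\big)$, reduce to an integral over $[0,1]$ via $u=t^{2}$, and apply the Euler-type integral $\int_{0}^{1}u^{c-1}(1-u)^{d-1}\,{}_{2}F_{1}(a,b;e;u)\,du=\frac{\Gamma(c)\Gamma(d)}{\Gamma(c+d)}\,{}_{3}F_{2}(a,b,c;e,c+d;1)$ with $c=\tfrac12$, $d=n$, $e=\tfrac12$. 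The repeated parameter $\tfrac12$ cancels matching top and bottom entries, collapsing the ${}_{3}F_{2}$ to ${}_{2}F_{1}(-m,m+\tfrac12;n+\tfrac12;1)$, which Gauss's summation theorem evaluates in closed form as $\frac{\Gamma(n+\frac12)\Gamma(n)}{\Gamma(n+\frac12+m)\Gamma(n-m)}$.

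Finally I would rewrite the resulting product of Gamma functions into the symmetric shape of the statement. The two algebraic facts needed are the Legendre duplication formula $\Gamma(2n)=\frac{2^{2n-1}}{\sqrt{\pi}}\Gamma(n)\Gamma(n+\tfrac12)$ and the identity $\frac{(-1)^{m}}{4^{m}}\binom{2m}{m}=\frac{\sqrt{\pi}}{\Gamma(\frac12-m)\,\Gamma(m+1)}$, which follows from the reflection formula together with $\Gamma(m+\tfrac12)=\frac{(2m)!}{4^{m}m!}\sqrt{\pi}$. The routine but slightly delicate bookkeeping of assembling these factors into the exact arrangement $\frac{1}{\Gamma(\frac{1-2m}{2})\,\Gamma(m+1)\,\Gamma(\frac{2m+1}{2}+n)\,\Gamma(n-m)}$ --- and observing that the factor $1/\Gamma(n-m)$ automatically enforces the truncation $m<n$ --- is the main place where care is required; everything upstream is a direct application of standard hypergeometric summation.
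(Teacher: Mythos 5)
Your proposal is correct and follows the paper's overall strategy: integrate the expansion (\ref{eq:cons2}) termwise against the polynomial weight and evaluate the resulting moments of $P_{2m}(2x-1)$ in closed form. For the first identity the two arguments coincide in substance --- your Rodrigues-formula computation just rederives the tabulated moment $\int_{0}^{1}x^{\mu-1}P_{n}(2x-1)\,dx=\Gamma(\mu)^{2}/\bigl(\Gamma(\mu+n+1)\Gamma(\mu-n)\bigr)$ that the paper cites from Gradshteyn--Ryzhik. For the second identity your route is genuinely different in its key lemma: the paper expresses $\int_{0}^{1}x^{\mu-1}(1-x)^{\nu-1}P_{n}(2x-1)\,dx$ as a ${}_{3}F_{2}(-n,n+1,\mu;1,\mu+\nu;1)$ and invokes Watson's theorem (which applies precisely because the symmetric weight forces $\mu=\nu$), whereas you exploit the evenness of $P_{2m}$ directly, pass to the quadratic representation ${}_{2}F_{1}(-m,m+\tfrac12;\tfrac12;t^{2})$, and finish with the terminating Gauss/Chu--Vandermonde evaluation of ${}_{2}F_{1}(-m,m+\tfrac12;n+\tfrac12;1)$. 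Your version is slightly more elementary (no deep ${}_{3}F_{2}$ summation theorem is needed) but is tied to the specific symmetric weight $x^{n-1}(1-x)^{n-1}$ and to even-degree Legendre polynomials, while the paper's Watson-theorem route computes the general Jacobi-type moment first and only then specializes. Both yield the same Gamma-quotient, and your bookkeeping (the duplication formula, the identity $(-1)^{m}4^{-m}\binom{2m}{m}=\sqrt{\pi}/\bigl(\Gamma(\tfrac12-m)\Gamma(m+1)\bigr)$, and the truncation enforced by $1/\Gamma(n-m)$) checks out; your $L^{2}$ justification of the interchange is in fact more explicit than the paper's.
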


\begin{proof}
Using (\ref{eq:cons2}) and switching the integral with the series
(it is quite easy to prove that it is allowed) we get
\[
\int_{0}^{1}x^{n}K\left(x\right)K\left(1-x\right)dx=\frac{\pi^{3}}{8}\sum_{m\geq0}\left[\frac{1}{4^{m}}\dbinom{2m}{m}\right]^{4}\left(4m+1\right)\int_{0}^{1}x^{n}P_{2m}\left(2x-1\right)dx
\]
and the result follows by the well-known identity
\[
\int_{0}^{1}x^{\mu-1}P_{n}\left(2x-1\right)dx=\frac{\Gamma\left(\mu\right)^{2}}{\Gamma\left(\mu+n+1\right)\Gamma\left(\mu-n\right)},\mathrm{Re}\left(\mu\right)>0
\]

(see \cite{GraRyz}, page $792$). Similarly, for the second identity
we use the relation
\begin{align*}
& \int_{0}^{1}x^{\mu-1}\left(1-x\right)^{\nu-1}P_{n}\left(2x-1\right)dx\\  & =\left(-1\right)^{n}\frac{\Gamma\left(\mu\right)\Gamma\left(\nu\right)}{\Gamma\left(\nu+\mu\right)}\,_{3}F_{2}\left(-n.n+1,\mu;1,\mu+\nu;1\right),\,\text{Re}(\mu)>0,\,\text{Re}(\nu)>0
\end{align*}
(see \cite{GraRyz}, page $792$) and the classical Watson theorem (see, for example, \cite{And}, Theorem $3.5.5$).
\[
_{3}F_{2}\left(a,b,c;\frac{a+b+1}{2},2c;1\right)=\frac{\sqrt{\pi}\Gamma\left(c+\frac{1}{2}\right)\Gamma\left(\frac{a+b+1}{2}\right)\Gamma\left(\frac{1-a-b}{2}+c\right)}{\Gamma\left(\frac{a+1}{2}\right)\Gamma\left(\frac{b+1}{2}\right)\Gamma\left(\frac{1-a}{2}+c\right)\Gamma\left(\frac{1-b}{2}+c\right)},
\]
with $\text{Re}(-a-b+2c)>-1$
\end{proof}

\section{Dougall's expansions, Mehler-Dirichlet theory and fl expasions}

There are interesting applications of Lemma $2.1$ in Zhou's paper, when we combine it with other identities.
We recall the Dougall's expansion (see \cite{Bat}, page $167$)

\begin{equation}
_{2}F_{1}\left(-\nu,\nu+1;1;x\right)=\sum_{m\geq0}\left[\frac{\sin\left(\pi\left(m-\nu\right)\right)}{\pi\left(m-\nu\right)}+\frac{\sin\left(\pi\left(m+\nu+1\right)\right)}{\pi\left(m+\nu+1\right)}\right]P_{m}\left(2x-1\right)\label{eq:FL 2f1}
\end{equation}
where the indeterminate forms must be interpreted as limits. Clearly,
this identity can be read as the FL expansion of the function $_{2}F_{1}\left(-\nu,\nu+1;1;x\right)$.
\begin{cor}
\label{cor:diff 2f1}We have that
\begin{align}
\sum_{m\geq0}\dbinom{2m}{m}\frac{\left(-1\right)^{m}}{4^{m}}\left[\frac{1}{\left(4m-1\right)^{2}}-\frac{1}{\left(4m+3\right)^{2}}\right] & =\frac{1}{9}\left(9\,_{3}F_{2}\left(-\frac{1}{4},-\frac{1}{4},\frac{1}{2};\frac{3}{4},\frac{3}{4};-1\right)\right. \nonumber\\
& \left.-\,_{3}F_{2}\left(\frac{1}{2},\frac{3}{4},\frac{3}{4};\frac{7}{4},\frac{7}{4};-1\right)\right) \nonumber \\
&=\frac{2\pi^{3/2}}{\Gamma\left(\frac{1}{4}\right)^{2}}.\label{eq:cor diff 2f1}
\end{align}
\end{cor}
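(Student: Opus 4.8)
The plan is to read the left-hand side as a once-differentiated special value of Dougall's expansion (\ref{eq:FL 2f1}) of $_2F_1(-\nu,\nu+1;1;x)$. First I would specialize that expansion at $x=1/2$, so that the argument of every Legendre polynomial becomes $2x-1=0$. Since $P_m(0)=0$ for odd $m$ and $P_{2k}(0)=(-1)^k 4^{-k}\binom{2k}{k}$, only the even-indexed terms survive, and the trigonometric coefficients collapse: using $\sin(\pi(2k-\nu))=\sin(\pi(2k+\nu+1))=-\sin(\pi\nu)$ one obtains
\[
{}_2F_1\!\left(-\nu,\nu+1;1;\tfrac12\right)=-\frac{\sin(\pi\nu)}{\pi}\sum_{k\ge0}(-1)^k\frac{1}{4^k}\binom{2k}{k}\left[\frac{1}{2k-\nu}+\frac{1}{2k+\nu+1}\right].
\]
On the left I would insert the Gauss second-summation value (\ref{eq:2f1 special}), yielding the analytic identity $L(\nu)=-\pi^{-1}\sin(\pi\nu)\,S(\nu)$, where $L(\nu)=\sqrt{\pi}/[\Gamma(\tfrac{1-\nu}{2})\Gamma(\tfrac{\nu+2}{2})]$ and $S(\nu)$ is the displayed sum.

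The key observation is that $\frac{d}{d\nu}\big[\frac{1}{2k-\nu}+\frac{1}{2k+\nu+1}\big]=\frac{1}{(2k-\nu)^2}-\frac{1}{(2k+\nu+1)^2}$, which at $\nu=\tfrac12$ equals $4\big[\frac{1}{(4k-1)^2}-\frac{1}{(4k+3)^2}\big]$; hence the target sum is precisely $\tfrac14 S'(\tfrac12)$. Differentiating $S(\nu)=-\pi L(\nu)/\sin(\pi\nu)$ and evaluating at $\nu=\tfrac12$, the cross term carrying $\cos(\pi\nu)$ drops out because $\cos(\pi/2)=0$, leaving $S'(\tfrac12)=-\pi L'(\tfrac12)$. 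The whole computation thus reduces to $L'(\tfrac12)$: logarithmic differentiation gives $L'(\nu)/L(\nu)=\tfrac12\psi(\tfrac{1-\nu}{2})-\tfrac12\psi(\tfrac{\nu+2}{2})$, and at $\nu=\tfrac12$ the digamma values at $\tfrac14$ and $\tfrac54$ differ by exactly $\psi(\tfrac54)=\psi(\tfrac14)+4$, so $L'(\tfrac12)/L(\tfrac12)=-2$. Combined with $\Gamma(\tfrac54)=\tfrac14\Gamma(\tfrac14)$, which gives $L(\tfrac12)=4\sqrt{\pi}/\Gamma(\tfrac14)^2$, this produces $\tfrac14 S'(\tfrac12)=-\tfrac{\pi}{4}L'(\tfrac12)=2\pi^{3/2}/\Gamma(\tfrac14)^2$, the claimed value.

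The middle equality (the two $_3F_2(-1)$'s) is then just a rewriting of each of the two sums in Pochhammer form: from $\binom{2m}{m}4^{-m}=(\tfrac12)_m/m!$, $(4m-1)^{-1}=-(-\tfrac14)_m/(\tfrac34)_m$, and $(4m+3)^{-1}=\tfrac13(\tfrac34)_m/(\tfrac74)_m$ one reads off $\,{}_3F_2(-\tfrac14,-\tfrac14,\tfrac12;\tfrac34,\tfrac34;-1)$ for the first piece and $\tfrac19\,{}_3F_2(\tfrac12,\tfrac34,\tfrac34;\tfrac74,\tfrac74;-1)$ for the second, which is exactly the stated combination $\tfrac19(9\,{}_3F_2-{}_3F_2)$. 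I expect the only genuine obstacle to be the analytic justification of differentiating the series termwise in $\nu$ near $\nu=\tfrac12$: here the summand of $S$ decays like $k^{-3/2}$ and that of $S'$ like $k^{-5/2}$ (from $\binom{2k}{k}4^{-k}\sim(\pi k)^{-1/2}$), so both series converge locally uniformly on a complex neighborhood of $\tfrac12$ avoiding the poles $\nu=2k$ and $\nu=-2k-1$, and the interchange of $\sum$ with $\tfrac{d}{d\nu}$ is legitimate. Once this is secured, the remainder is the bookkeeping above.
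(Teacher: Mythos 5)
Your proposal is correct and follows essentially the same route as the paper: specialize Dougall's expansion at $x=1/2$ using $P_{2k}(0)=(-1)^k4^{-k}\binom{2k}{k}$, identify the left side via Gauss's second summation theorem, differentiate in $\nu$, and set $\nu=1/2$. Your version merely makes explicit the details the paper leaves implicit (the vanishing of the $\cos(\pi\nu)$ cross term, the digamma evaluation, and the Pochhammer bookkeeping for the $_3F_2$ form), so it is a faithful, slightly more detailed rendering of the same argument.
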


\begin{proof}
From (\ref{eq:2f1 special}) and (\ref{eq:FL 2f1}) we get, taking
$x=1/2$ and recalling the well known relation
\[
P_{m}\left(0\right)=\begin{cases}
\tbinom{2m}{m}\frac{\left(-1\right)^{m}}{4^{m}}, & m\text{ even}\\
0, & m\text{ odd}
\end{cases}
\]
that
\[
\frac{\sqrt{\pi}}{\Gamma\left(\frac{1-\nu}{2}\right)\Gamma\left(\frac{\nu+2}{2}\right)}=\sum_{m\geq0}\dbinom{2m}{m}\frac{\left(-1\right)^{m}}{4^{m}}\left[\frac{\sin\left(\pi\left(2m-\nu\right)\right)}{\pi\left(2m-\nu\right)}+\frac{\sin\left(\pi\left(2m+\nu+1\right)\right)}{\pi\left(2m+\nu+1\right)}\right]
\]
and now the claim follows differentiating with respect $\nu$ both
sides and then taking $\nu=1/2.$
\end{proof}
Note that this result is interesting because, despite the seemingly
simple appearance, series like (\ref{eq:cor diff 2f1}) are often
linked to known, and important, mathematical constants but could be,
in general, difficult to deal with. An example is the series
\[
\sum_{m\geq0}\dbinom{2m}{m}\frac{\left(-1\right)^{m}}{4^{m}}\frac{1}{\left(4m+1\right)^{2}}
\]
which is closely related to the series
\[
\sum_{m\geq0}\dbinom{2m}{m}\frac{1}{4^{m}}\frac{H_{m}}{4m+1}
\]
and both are linked to lemnistate-like constants but, at present,
no technique is known for calculating their closed forms (see, for
more details on this topic, \cite{CamChu}).

Other interesting relations can be extrapolated from Zhou's paper;
indeed, from the well-known FL expansion
\[
K(x)=\sum_{m\geq0}\frac{2}{2m+1}P_{m}\left(2x-1\right),\,x\in[0,1),
\]
the Mehler-Dirichlet theory, the Hobson coupling formula, which states
that for $\nu\in\mathbb{C}$ and $\theta_{1},\theta_{2}\in\left[0,\pi\right)$
we have
\begin{align*}
&\frac{1}{2\pi}\int_{0}^{2\pi}P_{\nu}\left(\cos\left(\theta_{1}\right)\cos\left(\theta_{2}\right)+\sin\left(\theta_{2}\right)\sin\left(\theta_{2}\right)\cos\left(\phi\right)\right)d\phi\\ &=\begin{cases}
P_{\nu}\left(\cos\left(\theta_{1}\right)\right)P_{\nu}\left(\cos\left(\theta_{2}\right)\right), & \theta_{1}+\theta_{2}\leq\pi\\
P_{\nu}\left(-\cos\left(\theta_{1}\right)\right)P_{\nu}\left(-\cos\left(\theta_{2}\right)\right), & \theta_{1}+\theta_{2}\geq\pi
\end{cases}
\end{align*}
and its consequences (see Lemma 2.1 of \cite{zhou}), it is possible
to obtain the following ``quasi'' FL- expansions
\[
\sum_{m\geq0}\frac{P_{m}(2x-1)^{2}\left(-1\right)^{m}}{2m+1}=\frac{K(x)^{2}}{\pi},\,x\in\left[0,1/2\right],
\]
\[
\sum_{m\geq0}P_{m}(2x-1)^{2}z^{n}=\frac{2}{\pi}\frac{K\left(-\frac{16x\left(1-x\right)z}{(1-z)^{2}}\right)}{1-z},\,x,z\in\left(0,1\right)
\]
and then we are able to find the following identities:
\begin{cor}
We have that
\[
\frac{2}{\pi}\int_{0}^{1}\frac{K\left(\frac{16x\left(1-x\right)z^{2}}{(1+z^{2})^{2}}\right)}{1+z^{2}}dz=\begin{cases}
\frac{K(x)^{2}}{\pi}, & x\in\left[0,1/2\right]\\
\frac{K(1-x)^{2}}{\pi}, & x\in\left[1/2,1\right],
\end{cases}
\]
\[
\frac{2}{\pi}\int_{0}^{1}\frac{K\left(\frac{16x\left(1-x\right)z^{2}}{(1+z^{2})^{2}}\right)}{1+z^{2}}dx=\frac{\arctan(z)}{z},z\in(0,1).
\]
\end{cor}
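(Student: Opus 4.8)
The plan is to derive both identities from the single generating-function expansion
\[
\sum_{m\geq0}P_{m}(2x-1)^{2}w^{m}=\frac{2}{\pi}\frac{K\left(-\frac{16x(1-x)w}{(1-w)^{2}}\right)}{1-w}
\]
recorded just above the statement, by specializing to $w=-z^{2}$. Under this substitution the argument of $K$ becomes $+\frac{16x(1-x)z^{2}}{(1+z^{2})^{2}}$ (a quick check shows $1-w=1+z^{2}$), so that
\[
\frac{2}{\pi}\frac{K\left(\frac{16x(1-x)z^{2}}{(1+z^{2})^{2}}\right)}{1+z^{2}}=\sum_{m\geq0}(-1)^{m}P_{m}(2x-1)^{2}z^{2m},\qquad x,z\in(0,1).
\]
This single series identity is the common source of both formulas: integrating it in $z$ produces the first identity, integrating it in $x$ produces the second.

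For the first identity I would integrate the displayed series against $dz$ over $[0,1]$. Since $\int_{0}^{1}z^{2m}\,dz=\frac{1}{2m+1}$, term-by-term integration gives
\[
\frac{2}{\pi}\int_{0}^{1}\frac{K\left(\frac{16x(1-x)z^{2}}{(1+z^{2})^{2}}\right)}{1+z^{2}}\,dz=\sum_{m\geq0}\frac{(-1)^{m}P_{m}(2x-1)^{2}}{2m+1}=\frac{K(x)^{2}}{\pi}
\]
for $x\in[0,1/2]$, where the last equality is precisely the first ``quasi'' FL expansion stated above. The value on $[1/2,1]$ then comes for free: the argument $\frac{16x(1-x)z^{2}}{(1+z^{2})^{2}}$ is invariant under $x\mapsto 1-x$, so the left-hand side is symmetric about $x=1/2$ and must equal $\frac{K(1-x)^{2}}{\pi}$ there. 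For the second identity I would instead integrate in $x$ over $[0,1]$ for fixed $z\in(0,1)$. The normalization $\int_{-1}^{1}P_{m}(u)^{2}\,du=\frac{2}{2m+1}$ gives, after $u=2x-1$, that $\int_{0}^{1}P_{m}(2x-1)^{2}\,dx=\frac{1}{2m+1}$, whence
\[
\frac{2}{\pi}\int_{0}^{1}\frac{K\left(\frac{16x(1-x)z^{2}}{(1+z^{2})^{2}}\right)}{1+z^{2}}\,dx=\sum_{m\geq0}\frac{(-1)^{m}z^{2m}}{2m+1}=\frac{\arctan(z)}{z},
\]
the last step being the Maclaurin series of $\arctan$.

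The only genuine obstacle is justifying the two interchanges of sum and integral. For the $x$-integration this is painless: for fixed $z\in(0,1)$ the bound $|P_{m}(2x-1)|\le 1$ yields $|(-1)^{m}P_{m}(2x-1)^{2}z^{2m}|\le z^{2m}$, so the Weierstrass $M$-test gives uniform convergence in $x$ and legitimizes term-by-term integration. For the $z$-integration the naive majorant $\sum z^{2m}$ diverges at $z=1$, so I would instead fix $x\in(0,1/2)$, integrate over $[0,r]$ with $r<1$, where the power series converges uniformly, and let $r\to1^{-}$. The left-hand side converges by continuity of the integrand, since for $x<1/2$ one checks that $\frac{16x(1-x)z^{2}}{(1+z^{2})^{2}}$ is increasing in $z$ with maximum $4x(1-x)<1$, so $K$ stays bounded; the right-hand side converges by Abel's theorem, using that $\sum_{m}\frac{(-1)^{m}P_{m}(2x-1)^{2}}{2m+1}$ converges (indeed absolutely, as $P_{m}(\cos\theta)^{2}=O(1/m)$ for $2x-1=\cos\theta$, $\theta\in(0,\pi)$). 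The boundary cases $x\in\{0,1/2\}$ then follow by continuity.
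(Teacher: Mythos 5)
Your proof is correct and matches the route the paper intends: the corollary is stated as a direct consequence of the two ``quasi'' FL-expansions displayed just before it, and you obtain both identities by setting $w=-z^{2}$ in the generating function and integrating term by term in $z$ (using the first quasi-expansion) and in $x$ (using the Legendre normalization), exactly as the paper's ``and then we are able to find'' suggests. Your careful justification of the interchange of sum and integral near $z=1$ is a welcome addition that the paper omits.
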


\section{Conclusions}

We have shown some examples of how FL theory is a useful tool for
dealing with computational problems linked to some types of hypergeometric
functions and how its flexibility allows, at least in the first instance,
to be exploited in other areas of mathematics. We want to underline
how the results presented in this work are only a part of the possible
ones obtainable from the general formulas and how these techniques
could be used in other fields of mathematics and beyond; this lead
us to continue our investigation on these topics and we hope to produce
other interesting results in the future.

\section{Acknowledgments}

The author is a member of the Gruppo Nazionale per l'Analisi Matematica,
la Probabilit\`a e le loro Applicazioni (GNAMPA) of the Istituto Nazionale
di Alta Matematica (INdAM).

\end{document}